\documentclass[11pt, a4paper,reqno]{amsart}

\newcommand{\reg}{regular}
\newcommand{\rqr}{right quasi-regular}

\usepackage{amssymb}

\usepackage{enumitem}
\setlist[enumerate,1]{label=\rm(\arabic*)}
\setlist[enumerate,2]{label=\rm(\alph*)}
\setlist[enumerate,3]{label=\rm(\roman*)}

\usepackage[PS]{diagrams}
\usepackage{xcolor}
\usepackage{pgf}

\newcommand{\ie}{\textit{i.e.}}

\newcommand{\ignore}[1]{\relax}

\newcommand{\bZ}{\mathbb{Z}}

\numberwithin{equation}{section}

\newtheorem{theorem}[equation]{Theorem}
\newtheorem{corollary}[equation]{Corollary}

\newtheorem{lemma}[equation]{Lemma}

\theoremstyle{definition}
\newtheorem{definition}[equation]{Definition}

\newtheorem{example}[equation]{Example}

\let\ideal=\unlhd

\usepackage[hypertexnames=false,colorlinks=false,pdfborderstyle={/S/U/W 1}]{hyperref}
\usepackage{bookmark}

\begin{document}

\title%
[A characterisation of the Jacobson radical]%
{A ``quite superfluous'' characterisation of the Jacobson radical of an associative ring}

\date{\today}

\author{Thomas H\"uttemann}

\address{Thomas H\"uttemann\\ Queen's University Belfast\\ School of
  Mathematics and Physics\\ Mathematical Sciences Research Centre\\
  Belfast BT7~1NN\\ Northern Ireland, UK}

\email{t.huettemann@qub.ac.uk}

\urladdr{http://www.qub.ac.uk/puremaths/Staff/Thomas\ Huettemann/}

\subjclass[2000]{Primary 16N20; Secondary 16D25, 16D80}

\begin{abstract}
  It is well-known that the \textsc{Jacobson} radical of a unital
  ring~$R$ is the largest superfluous right ideal of~$R$. It is
  recorded here that the result carries over to non-unital rings
  provided the notion of ``superfluous'' is taken relative to all {\it
    regular\/} ideals.
\end{abstract}

\vglue -1cm

\maketitle

\section{The Jacobson radical of a ring}
\label{sec:J(R)}

Let $R$ be an associative ring, possibly non-unital. An element
$a \in R$ is said to be {\it\rqr{}\/} if there exists $x \in R$ with
$ax = a+x$. (In case $R$ is a unital ring, this is equivalent to $1-a$
having right inverse $1-x$.) The {\it \textsc{Jacobson} radical~$J(R)$
  of~$R$\/} is the two-sided ideal
\begin{displaymath}
  J(R) = \{ a \in R \,|\, \forall b \in R: %
  \text{\(ab\) is \rqr} \} \ ,
\end{displaymath}
see, for example, \cite[Definition~6.6]{MR0188241}.

\begin{example}
  \label{ex:R_0}
  Write $Q$ for the (commutative) unital ring of rational numbers of
  the form $s/t$ for $s,t \in \bZ$ with $2 \nmid t$ and $3 \nmid
  t$.
  The units in~$Q$ are precisely the elements represented by fractions
  $s/t$ with $2,3 \nmid t$ and $2,3 \nmid s$, in which case the
  inverse of~$s/t$ is~$t/s$.

  Let $R_{0} = (2)$, the (right) ideal consisting of fractions $2s/t$
  with $2,3 \nmid t$, considered as a non-unital ring. We claim that
  {\it the \textsc{Jacobson} radical of~$R_{0}$ is the ideal $J = (6)$
    of fractions of the form $6s/t$ with $2,3 \nmid t$}.

  As $R_{0} \ideal Q$ we have $J(R_{0}) = R_{0} \cap J(Q)$,
  by \cite[Theorem~6.14]{MR0188241}. Hence it is enough to verify that
  $J(Q) = J$.
  To show $J(Q) \supseteq J$ let $a = 6s/t \in J$ and
  $b = s'/t' \in Q$, where $2,3 \nmid t,t'$. As $tt'-6ss'$ is not
  divisible by either~$2$ or~$3$, the quantity
  \begin{displaymath}
    1-ab = 1 - \frac{6s}{t} \cdot \frac{s'}{t'} = \frac{tt' -
      6ss'}{tt'}
  \end{displaymath}
  is a unit in~$Q$ whence $ab$ is~\rqr. It follows that $a \in J(Q)$.
  To show $J(Q) \subseteq J$ suppose that $a = s/t \notin J$, where
  $2,3 \nmid t$. Then $2 \nmid s$ or~$3 \nmid s$. If $3 \nmid s$
  choose $b \in \bZ$ such that $sb \equiv t \mod~3$. Then $3|(t-sb)$,
  the quantity
  \begin{displaymath}
    1-ab = \frac{t-sb}{t}
  \end{displaymath}
  is not a unit in~$Q$, and $ab$ is not~\rqr. It follows that
  $a \notin J(R_{0})$ as required.---The case $2 \nmid s$ is dealt
  with by a similar argument. \qed
\end{example}

\section{Superfluous ideals and the Jacobson radical}
\label{sec:unital}

A right ideal $J \ideal R$ is called {\it superfluous\/} if for every
right ideal $I \ideal R$,
\begin{displaymath}
  J+I = R \quad \Longrightarrow \quad I = R \ .
\end{displaymath}
The following well-known result can be found 
in \textsc{Anderson}-\textsc{Fuller} \cite[Theorem~15.3]{MR1245487}.

\begin{theorem}
  \label{thm:superfluous}
  Let $R$ be a unital ring. Its \textsc{Jacobson} radical~$J(R)$ is a
  superfluous right ideal of~$R$ which contains every superfluous
  right ideal of~$R$ (\ie, $J(R)$ is the unique largest superfluous
  right ideal of~$R$). \qed
\end{theorem}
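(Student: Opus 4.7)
My plan is to split the theorem into two assertions: first, that $J(R)$ is itself superfluous; second, that every superfluous right ideal $J \ideal R$ lies inside $J(R)$. Before attacking either, I will record the useful reformulation, valid because $R$ has an identity, that an element $a \in R$ is \rqr{} if and only if $1-a$ has a right inverse in~$R$; this follows by rewriting $ax = a+x$ as $(1-a)(1-x) = 1$. Consequently $a \in J(R)$ if and only if $1-ab$ has a right inverse in~$R$ for every $b \in R$.

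For superfluity of~$J(R)$, I would start from an assumption $J(R) + I = R$ and use the identity of~$R$ to write $1 = j + i$ with $j \in J(R)$ and $i \in I$. The criterion above, applied with $b = 1$, supplies a right inverse $c$ for $i = 1-j$. Since $I$ is a right ideal, $1 = ic \in I$, forcing $I = R$.

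For the maximality half, I would take any superfluous right ideal~$J$ and argue by contradiction. If some $a \in J$ lies outside~$J(R)$, then there is $b \in R$ for which $ab$ fails to be \rqr, so $1-ab$ has no right inverse, and therefore $(1-ab)R$ is a proper right ideal of~$R$. Zorn's lemma then produces a maximal proper right ideal~$M$ containing $(1-ab)R$. Because $a \in J$ gives $ab \in J$, the decomposition $1 = (1-ab) + ab$ exhibits $1$ as an element of $M + J$, so $J + M = R$. Superfluity of~$J$ forces $M = R$, contradicting the properness of~$M$.

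I expect the only step beyond bookkeeping to be this Krull-type use of Zorn's lemma to enlarge $(1-ab)R$ to a maximal right ideal. It is exactly here that the identity of~$R$ is indispensable, and it is precisely this step that will need to be replaced, by working only with \reg{} ideals, once the result is transported to the non-unital setting in the sequel.
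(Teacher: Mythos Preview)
Your proof is correct. Note, however, that the paper does not itself prove this theorem: it is stated with a citation to \textsc{Anderson}--\textsc{Fuller} and a closing \qed, the paper's own contribution being the non-unital generalisation, Theorem~\ref{thm:weakly_superfluous}. Comparing your argument against that proof (specialised to the unital case), your second half is essentially the same: pick $a \in K$ and $b \in R$ with $ab$ not \rqr, enlarge to a maximal right ideal~$M$ not containing~$ab$, and deduce $K + M = R$ with $M \neq R$. Your first half, by contrast, is more direct than the paper's: you exploit the identity to write $1 = j + i$ and use the right-invertibility of $1-j$, whereas the paper instead embeds any proper (regular) right ideal~$I$ into a (regular) maximal right ideal~$M$ via Lemma~\ref{lem:max} and then uses $J(R) \subseteq M$ to obtain $J(R) + I \subseteq M \neq R$. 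Your route is slicker in the unital setting but does not survive the passage to non-unital rings; the maximal-ideal argument is precisely what generalises. In that light your closing remark slightly undersells the difficulty: it is not only the \textsc{Zorn} step in the second half that must be reworked for the sequel, but also your entire first half, which rests squarely on the decomposition $1 = j + i$.
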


In the next section we will generalise this result to include the case
of non-unital rings. We note first that {\it $J(R)$ need not be
  superfluous in general\/}:

\begin{example}
  \label{ex:not_superfluous}
  Consider the ring $R_{0} = (2) \ideal Q$ from Example~\ref{ex:R_0},
  and let $I = (4) \lhd R_{0}$ be the proper (right) ideal of~$R_{0}$
  consisting of fractions $4s/t$ with $2,3 \nmid t$.  Then
  $J(R_{0}) + I = R_{0}$; for given $a = 2s/t \in R_{0}$ with
  $2,3 \nmid t$ we have trivially
  \begin{displaymath}
    a = \frac{2s}{t} = \frac{6s}{t} - \frac{4s}{t} \in (6) + (4) =
    J(R_{0}) + I \ .
  \end{displaymath}
  As $I \subsetneqq R_{0}$ this shows that $J(R_{0})$ is not a
  superfluous right ideal. \qed
\end{example}

\section{quite superfluous ideals and the Jacobson radical}
\label{sec:non-unital}

Recall that a right ideal $I$ of a ring~$R$ is called {\it \reg\/}
if there exists an element $e \in R$ such that $er-r \in I$ for all
$r \in R$. We call $e$ a {\it regulator\/} of~$I$.

\begin{definition}
  A right ideal $J$ of~$R$ is called {\it quite superfluous\/} if $J+I=R$ implies
  $I=R$ for every \reg{} right ideal~$I$ of~$R$.
\end{definition}

Every superfluous right ideal is quite superfluous. Every right ideal contained
in a quite superfluous right ideal is quite superfluous.

\begin{lemma}[{compare \textsc{McCoy} \cite[Lemma~6.18]{MR0188241}}]
  \label{lem:max}
  Let $I$ be a regular right ideal of~$R$ with regulator~$e$. Suppose
  that $I \neq R$. Then $e \notin I$, the element $e$ is not \rqr, and
  there exists a \reg{} maximal right ideal $M$ containing~$I$ but
  not~$e$.
\end{lemma}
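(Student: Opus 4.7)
My plan is to treat the three assertions in the stated order, as each follows cleanly from the previous.

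For the first claim, I would argue by contradiction: if $e \in I$, then for every $r \in R$ I can write $r = er - (er - r)$; since $er \in I$ (as $I$ is a right ideal containing $e$) and $er - r \in I$ by the regulator property, this forces $r \in I$, giving $I = R$, contrary to hypothesis. Hence $e \notin I$.

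For the second claim I would again reason by contradiction. If $e$ were right quasi-regular, pick $x \in R$ with $ex = e + x$, so that $e = ex - x$. But $ex - x \in I$ by the defining property of the regulator, and so $e \in I$, contradicting the first claim.

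The third claim is the real content of the lemma, and the main obstacle is organising the Zorn's lemma argument so as to simultaneously secure maximality as a right ideal \emph{and} regularity. The plan is to consider the poset $\mathcal{F}$ of right ideals $K$ with $I \subseteq K$ and $e \notin K$. This poset is non-empty (it contains $I$, by the first claim) and closed under unions of chains, so Zorn's lemma yields a maximal element $M$. To see $M$ is a maximal right ideal of $R$, let $N$ be a right ideal properly containing $M$. By maximality of $M$ in $\mathcal{F}$, we must have $e \in N$; then for every $r \in R$ the identity $r = er - (er - r)$ together with $er - r \in I \subseteq M \subseteq N$ gives $r \in N$, so $N = R$. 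Finally, $M$ is regular with regulator $e$, since $er - r \in I \subseteq M$ for every $r \in R$. By construction $I \subseteq M$ and $e \notin M$, completing the proof.
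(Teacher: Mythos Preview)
Your proof is correct and follows essentially the same approach as the paper's: the same contradiction arguments for $e \notin I$ and $e$ not right quasi-regular, and the same Zorn's lemma argument on the poset of right ideals containing~$I$ but not~$e$, with the same verification that the resulting~$M$ is regular with regulator~$e$ and is a maximal right ideal. Your write-up is in fact slightly more explicit about the Zorn hypotheses than the paper's.
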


\begin{proof}
  If $e \in I$ were true we had $er \in I$ and hence
  $r = er-(er-r) \in I$ for all~$r$ whence $I=R$, contrary to our
  hypothesis that $I$~is a proper right ideal. Hence $e \notin I$.
  Moreover, $e$ is not \rqr. For otherwise there existed $x \in R$
  such that $ex = e+x$ whence $e = ex-x \in I$.

  By \textsc{Zorn}'s lemma there exists a right ideal $M \supseteq I$
  which is maximal among all proper right ideals containing~$I$ but
  not~$e$. Now $M$ is \reg{} as $I$~is, with the same
  regulator~$e$. Moreover, $M$ is a maximal right ideal. For if
  $N \supsetneqq M$ then $e$ is a regulator of~$N$ as well, and we
  must have $e \in N$ by maximality of~$M$. This implies that
  $x = ex-(ex-x) \in N$ for all $x \in R$ whence $N = R$.
\end{proof}

It is known that $J(R)$ is the intersection of all regular maximal
right ideals of~$R$ \cite[Theorem~6.20]{MR0188241}. Hence $J(R) = R$
if and only if $R$ has no regular right ideals distinct from~$R$, in
which case every right ideal of~$R$ is quite superfluous.

\begin{theorem}
  \label{thm:weakly_superfluous}
  Let $R$ be an associative ring. Its \textsc{Jacobson} radical~$J(R)$
  is a quite superfluous right ideal containing every quite superfluous right ideal of~$R$
  (\ie, $J(R)$ is the unique largest quite superfluous right ideal of~$R$).
\end{theorem}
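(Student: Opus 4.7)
The plan is to reduce the statement to the characterisation of $J(R)$ as the intersection of all \emph{regular} maximal right ideals of~$R$ (recalled in the paragraph immediately preceding the theorem), combined with the extension property supplied by Lemma~\ref{lem:max}.

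I would first dispose of the degenerate case $J(R)=R$. By the remark preceding the theorem, $R$ then has no proper regular right ideal, so every right ideal of~$R$ is vacuously quite superfluous and the statement is immediate. From now on I would assume $J(R) \neq R$.

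To prove that $J(R)$ is itself quite superfluous, I would argue by contradiction: suppose $J(R)+I = R$ for some regular right ideal $I$ with $I \neq R$. Lemma~\ref{lem:max} then produces a regular maximal right ideal $M \supseteq I$. Since $J(R)$ is the intersection of \emph{all} regular maximal right ideals, $J(R) \subseteq M$, and hence $R = J(R)+I \subseteq M$, contradicting the properness of~$M$. This proves quite-superfluity of~$J(R)$.

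For the reverse inclusion, let $K$ be any quite superfluous right ideal of~$R$, and suppose toward a contradiction that $K \not\subseteq J(R)$. Then $K$ fails to be contained in some regular maximal right ideal~$M$, so $K+M \supsetneqq M$; maximality of~$M$ forces $K+M = R$. As $M$ is a proper regular right ideal, the quite-superfluity of~$K$ yields $M = R$, contradicting the properness of~$M$. Hence $K \subseteq J(R)$.

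I do not anticipate a substantial obstacle: both halves of the argument are short consequences of structural results already in place, with Lemma~\ref{lem:max} doing the essential work of manufacturing regular maximal right ideals on demand. The only subtlety worth flagging is the degenerate case $J(R)=R$, where no regular maximal right ideals exist and one must fall back on the remark that every right ideal is then quite superfluous.
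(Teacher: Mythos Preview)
Your proof is correct and closely parallels the paper's. The treatment of the degenerate case $J(R)=R$ and the first half of the main argument---showing $J(R)$ is quite superfluous via Lemma~\ref{lem:max} and the inclusion $J(R) \subseteq M$ for every regular maximal right ideal~$M$---are essentially identical to what the paper does.

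For the second half you take a slightly shorter path: you invoke the characterisation $J(R) = \bigcap M$ directly to produce a regular maximal right ideal~$M$ with $K \not\subseteq M$, whereas the paper returns to the quasi-regularity definition of~$J(R)$, finds $a \in K$ and $b \in R$ with $ab$ not right quasi-regular, and then applies Lemma~\ref{lem:max} to the regular right ideal $I=\{abr - r \mid r \in R\}$ (with regulator~$ab$) to manufacture the witnessing~$M$ explicitly. Your route avoids this explicit construction and does not need Lemma~\ref{lem:max} in the second half at all; the paper's route keeps closer contact with the element-wise definition of~$J(R)$ but is a little longer.
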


If $R$ is unital then every right ideal is \reg{} with regulator
$e=1$, and ``quite superfluous'' is the same as ``superfluous''. Thus
Theorem~\ref{thm:weakly_superfluous} indeed generalises
Theorem~\ref{thm:superfluous}.

\begin{proof}[Proof of Theorem~\ref{thm:weakly_superfluous}]
  In view of the remarks preceding the Theorem, we may assume that
  $J(R) \neq R$. We start by proving that $J(R)$ is quite superfluous. Suppose that
  $I \subsetneqq R$ is a \reg{} right ideal of~$R$; we will show that
  $J(R)+I \neq R$.  Since $I$ is \reg, we can choose a regulator
  $e \in R$ so that $er-r$ lies in~$I$ for all~$r$. By
  Lemma~\ref{lem:max} there exists a regular maximal right ideal $M$
  containing~$I$. As the \textsc{Jacobson} radical is contained in
  every \reg{} maximal ideal, we conclude
  $J(R) + I \subseteq J(R) + M = M \neq R$.

  Next, we prove that $J(R)$ contains every quite superfluous right ideal
  of~$R$. Suppose that the right ideal~$K$ of~$R$ is not contained
  in~$J(R)$. Then there exist elements $a \in K$ and $b \in R$ such
  that $ab$ is not \rqr. Then $ab$ cannot be contained in the \reg{}
  right ideal $I = \{abr-r \,|\, r \in R \}$. By Lemma~\ref{lem:max},
  applied to $e =ab$, there exists a \reg{} maximal right ideal~$M$
  of~$R$ containing~$I$ with $ab \notin M$. As $ab \in K$ we have
  $K + M \supsetneqq M$ whence $K + M = R$ as $M$ is a maximal
  ideal. But $M \neq R$, so $K$ is not quite superfluous.
\end{proof}

\begin{corollary}
  A right ideal $I$ of~$R$ is quite superfluous if and only if it is contained
  in~$J(R)$. \qed
\end{corollary}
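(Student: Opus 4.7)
The plan is to deduce the corollary directly from Theorem~\ref{thm:weakly_superfluous} together with the two observations made immediately after the definition of ``quite superfluous''. Both implications are essentially one-liners, so the main task is simply to point to the right ingredient in each direction; no new construction or computation is needed.

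For the forward implication, suppose $I$ is a quite superfluous right ideal of~$R$. Then Theorem~\ref{thm:weakly_superfluous} asserts that $J(R)$ contains every such ideal, so $I \subseteq J(R)$. For the reverse implication, suppose $I \subseteq J(R)$. Theorem~\ref{thm:weakly_superfluous} tells us that $J(R)$ is itself quite superfluous, and the remark following the definition of ``quite superfluous'' (``every right ideal contained in a quite superfluous right ideal is quite superfluous'') then yields that $I$ is quite superfluous.

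There is no genuine obstacle: the real content has already been absorbed into Theorem~\ref{thm:weakly_superfluous} and the elementary hereditary property of quite superfluous ideals. The only thing worth double-checking is that the hereditary property really does hold without any regularity hypothesis on the smaller ideal, but this is immediate: if $I \subseteq J$ and $I + A = R$ for a regular right ideal $A$, then $J + A = R$, forcing $A = R$. Hence the corollary follows at once.
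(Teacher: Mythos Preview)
Your argument is correct and matches the paper's intent: the corollary is marked with a \qed{} and no proof precisely because both directions are immediate from Theorem~\ref{thm:weakly_superfluous} together with the hereditary remark following the definition of ``quite superfluous''. There is nothing to add or amend.
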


Thus the \textsc{Jacobson} radical of~$R$ can be described as
\begin{equation}
  \label{eq:1} 
  J(R) = \{ a \in R \,|\, \text{\((a)\) is quite superfluous} \} \ ,
\end{equation}
where $(a)$ denotes the smallest right ideal of~$R$ containing~$a$. As
the \textsc{Jacobson} radical of the ring $R_{0}$ of
Example~\ref{ex:R_0} contains the element $a=6$, but
$(a) = J(R_{0})$ is not a superfluous (right) ideal by
Example~\ref{ex:not_superfluous}, we cannot replace ``quite superfluous'' by
``superfluous'' in~\eqref{eq:1}.

\medbreak

A characterisation of the \textsc{Jacobson} radical closely related
to~\eqref{eq:1} has been given by \textsc{Kert\'esz} \cite{MR0151488}:
\begin{equation} 
  \label{eq:2}
  J(R) = \{ a \in R \,|\, \forall s \in R \colon \text{\((sa)\) is
    superfluous} \} \ . 
\end{equation}
By comparing \eqref{eq:1} with~\eqref{eq:2} we see that the right
ideal $(a)$ is quite superfluous if and only if $(sa)$ is
superfluous for all $s \in R$.

\raggedright

\end{document}